\newtheorem{lemma}{Lemma}
\newtheorem{theorem}{Theorem}
\theoremstyle{definition}
\newtheorem{definition}{Definition}
\newtheorem{remark}{Remark}
\newtheorem*{ackno}{Acknowledgement}
\DeclareMathOperator{\sgn}{sgn}
\DeclareMathOperator{\id}{id}
\DeclareMathOperator{\SO}{SO}
\newcommand{\Sph}{\mathbb{S}}
\newcommand{\Ind}{\mathbbm{1}}
\newcommand{\X}{\mathscr{X}}
\newcommand{\J}{\mathcal{J}}
\DeclareMathOperator{\diam}{diam}
\newcommand{\dd}{\mathrm{d}}
\begin{document}

\title[Hyperuniformity]
{Hyperuniform point sets on the sphere:\\ probabilistic aspects}

% \author{Johann S.}{Brauchart}{J.~S.~Brauchart}{Graz}
% \author{Peter J.}{Grabner}{P.~J.~Grabner}{Graz}
% \author{W\"oden B.}{Kusner}{W.~B.~Kusner}{Nashville}
% \author{Jonas}{Ziefle}{J.~Ziefle}{T\"ubingen}

% \address{Institute of Analysis and Number Theory,
%   Graz University of Technology,
%   Kopernikusgasse 24.
% 8010 Graz,
% Austria}
% \email{j.brauchart@tugraz.at}
% \email{peter.grabner@tugraz.at}
% \address{Department of Mathematics, Vanderbilt University,
% 1326 Stevenson Center, Nashville, TN 37240, USA}
% \email{wkusner@gmail.com}

% \address{Fachbereich Mathematik, Universit\"at T\"ubingen,
%   Auf der Morgenstelle 10, 72076 T\"ubingen, Germany}
% \email{jonas.ziefle@uni-tuebingen.de}

% \thanks{The first author is supported by the Lise
%   Meitner scholarship M~2030 of the Austrian Science Foundation FWF.\\
%   The second and third author are supported by the Austrian Science Fund FWF
%   project F5503 (part of the Special Research Program (SFB)
%   ``Quasi-Monte Carlo Methods: Theory and Applications'').\\
%   The third author is supported by the National Science Foundation under Grant 
%   No. DMS-1516400}

\subjclass[2010]{65C05, 11K38, 65D30}
\author[J. S. Brauchart]{Johann S. Brauchart\textsuperscript{\dag}}
\author[P. J. Grabner]{Peter J. Grabner\textsuperscript{\ddag}}
\address[J. B., P. G.]{Institute of Analysis and Number Theory,
  Graz University of Technology,
  Kopernikusgasse 24.
8010 Graz,
Austria}
\email{j.brauchart@tugraz.at}
\thanks{\dag{} This author is supported by the Lise Meitner scholarship M~2030
of the Austrian Science Foundation FWF}

\thanks{\textsuperscript{\ddag} These authors were supported by the Austrian
  Science Fund FWF project F5503 (part of the Special Research Program (SFB)
  ``Quasi-Monte Carlo Methods: Theory and Applications'')}
\email{peter.grabner@tugraz.at}

\author[W. Kusner]{W\"oden Kusner\textsuperscript{\ddag}}
\address[W. K.]{Department of Mathematics, Vanderbilt University,
1326 Stevenson Center, Nashville, TN 37240, USA}
\email{wkusner@gmail.com}

\author[J. Ziefle]{Jonas Ziefle}
\address[J. Z.]{Fachbereich Mathematik,
  Auf der Morgenstelle 10, 72076 T\"ubingen, Germany}
\email{jonas.ziefle@googlemail.com}

\begin{abstract}
  The concept of hyperuniformity has been introduced by Torquato and Stillinger
  in 2003 as a notion to detect structural behaviour intermediate
  between crystalline order and amorphous disorder. The present paper studies a
  generalisation of this concept to the unit sphere. It is shown that several
  well studied determinantal point processes are hyperuniform.
\end{abstract}
\subjclass[2010]{60G55 (Primary) 11K38 65C05 82D30 (Secondary)}
\maketitle

\section{Introduction}
It has been observed for a long time in the physics literature that
large (ideally infinite) particle systems can exhibit structural
behaviour between crystalline order and total disorder. Very prominent
examples are given by quasi-crystals and jammed sphere packings. Research in
mathematics and physics has been inspired by the
discovery of such materials which lie between crystalline
order and amorphous disorder. We just mention de~Bruijn's Fourier analytic
explanation for the diffraction pattern of quasi-crystals \cite{dBr86}
and the extensive collection of articles on quasi-crystals
\cite{AxGr95} as examples.

Hyperuniformity was introduced in \cite{torquato2003local} as a concept to
measure the occurrence of ``intermediate'' order. Such hyperuniform
configurations $X$ occur in jammed packings, in colloids, as well as in
quasi-crystals. The main feature of hyperuniformity is the fact that local
density fluctuations (``number variance'') are of smaller order than for an
i.i.d. random (``Poissonian'') point configuration.

% Alternatively, hyperuniformity can be
% described in terms of the structure factor
% \begin{equation}\label{eq:structure}
%   S(\mathbf{k})=\lim_{V\to\mathbb{R}^d}\frac1{|V|}
% \sum_{\mathbf{x},\mathbf{y}\in V\cap X}e^{i\langle\mathbf
%   k,\mathbf{x}-\mathbf{y}\rangle}\quad
% \text{(thermodynamic limit)}
% \end{equation}
%  by $\lim_{\mathbf{k}\to\mathbf{0}}S(\mathbf{k})=0$.

The point of view taken in \cite{torquato2003local} was probabilistic based on
point processes. It has since been observed that determinantal point processes
exhibit less disordered behaviour in comparison to i.i.d. points due to the
built in mutual repulsion of particles (see
\cite{Hough_Krishnapur_Peres+2009:zeros_gaussian}). The prototypical example of
such a point process is given by the distribution of fermionic particles, whose
joint wave function is given as a determinant expressed in terms of the
individual wave functions.

An infinite discrete point set $X\subset\mathbb{R}^d$ is defined to be
hyperuniform if the variance of the random variable (``number variance'')
$\#((\mathbf{x}+t\Omega)\cap X)$ behaves like $o(t^d)$ as $t\to\infty$. Here, 
$\Omega$ is a fixed compact test set (``window''); in most of the cases
$\Omega$ is chosen as a Euclidean ball. Notice that the number variance for
i.i.d. point sets is of exact order $t^d$. Thus, hyperuniformity is
characterised by a smaller order of magnitude of the variance. It was shown in
\cite{torquato2003local} that the best possible order for the variance is
$t^{d-1}$.

In \cite{Brauchart_Grabner_Kusner2019:hyperuniform_deterministic} a notion of
hyperuniformity for sequences of finite point sets on the sphere was
introduced. In that paper three regimes of hyperuniformity were identified and
studied, and several sequences of deterministically given point sets such as
designs, QMC-designs, and certain energy minimising point sets were shown to
exhibit hyperuniform behaviour. We also refer the reader to related recent work
\cite{PhysRevE.100.022107,PhysRevE.99.032601}.

It is the aim of the present paper to study hyperuniformity on the sphere for
samples of point processes on the sphere. Especially, we study the spherical
ensemble (see \cite{Krishnapur2006:zeros_random_analytic,
  Hough_Krishnapur_Peres+2009:zeros_gaussian}) on $\mathbb{S}^2$
(Section~\ref{sec:spherical-ensemble}), the harmonic ensemble introduced in
\cite{Beltran_Marzo_Ortega-Cerda2016:determinantal}
(Section~\ref{sec:harmonic}), and the jittered sampling process
(Section~\ref{sec:jittered}). We observe that the jittered sampling process can
be seen as a determinantal point process. All processes turn out to be
hyperuniform in all three regimes.  The harmonic ensemble has slightly weaker
behaviour in the threshold order regime.

Throughout this paper $\sigma=\sigma_d$ will denote the normalised
surface area measure on $\mathbb{S}^d$. We suppress the dependence on $d$ in
this notation.

%%% Local Variables:
%%% mode: latex
%%% TeX-master: "Hyperuniformity-MOFM"
%%% End:

\section{Point Processes}\label{sec:point-processes}

We consider a point process $\X_N$ sampling $N$ points given by the \emph{joint
  densities} $\rho^{(N)}$
in the sense that
\begin{equation*}
  \mathbb{P}\left((X_1,\dots,X_N)\in B\right)=
  \idotsint_B\rho^{(N)}(\mathbf{x}_1,\dots,\mathbf{x}_N)\,
  \dd\sigma (\mathbf{x}_1)
  \cdots \dd\sigma (\mathbf{x}_N),
\end{equation*}
where $B$ is a measurable subset of $(\Sph^d)^N$.
% and $\sigma=\sigma_d$ denotes
% the normalised surface area measure on $\mathbb{S}^d$.
We will assume
throughout this paper that the number of points $N$ is fixed and that the
process is simple, which means that the probability of sampling a point more
than once is zero. In some of the studied examples the number of points will
depend on a parameter $L$; in these cases we write $N_L$ for this number.

Note that in the literature
(e.g.,~\cite{Hough_Krishnapur_Peres+2009:zeros_gaussian}) the process is often
given in terms of its \emph{joint intensities} (\emph{correlation functions})
which are given by $N!\cdot\rho^{(N)}$. We use joint densities with respect to
the natural measure $\sigma$ on $\Sph^d$ in this paper
since they make the asymptotic dependence on $N$ more transparent. By a result
of Lenard~\cite{Soshnikov2000}, locally integrable functions $\rho^{(N)}$ can be
represented as the joint densities of a point process if and only if they
satisfy a positivity condition and the particles are exchangeable; i.e., the
joint densities are invariant under permutation of the entries
\begin{align} \label{permutation-inv} 
  \rho^{(N)}(\mathbf{x}_{\tau(1)},\dots,\mathbf{x}_{\tau(N)})=
  \rho^{(N)}(\mathbf{x}_1,\dots,\mathbf{x}_N) \ 
\text{for all} \
\mathbf{x}_i\in \Sph^d, \ \tau \in \text{S}_N .
\end{align}  
The \emph{reduced densities}
\begin{align*} 
\rho^{(N)}_k(\mathbf{x}_1,\dots,\mathbf{x}_k):=\int_{(\Sph^d)^{N-k}}
  \rho^{(N)}(\mathbf{x}_1,\dots,\mathbf{x}_N)\, \dd\sigma (\mathbf{x}_{k+1})
  \cdots \dd\sigma (\mathbf{x}_N),
\end{align*}
$1 \leq k \leq N$, 
describe how $k$ of $N$ points are distributed. The joint intensities are $\frac{N!}{(N-k)!}\rho^{(N)}_k$. 

The number of points that are put into a test set
$B\subseteq\Sph^d$ by the process is the random variable
$\X_N(B):=\sum_{i=1}^N \Ind_B(X_i)$, or in other words $N$ times the
empirical measure of $B$. As usual, $\Ind_B$ denotes the indicator function of
the set~$B$.

For most of our study, we restrict ourselves to processes that are invariant
under isometries of the sphere
\begin{equation} \label{isometry-inv}
  \begin{split}
    \rho^{(N)}(A \mathbf{x}_1,\dots,A
    \mathbf{x}_N)&=\rho^{(N)}(\mathbf{x}_1,\dots,\mathbf{x}_N) \\ &\phantom{= }
    \text{for all}
    \ \mathbf{x}_i \in \Sph^d, \ A \in \SO(d+1) .
  \end{split}
\end{equation} 
By summation over permutations and integration over isometries, joint densities
satisfying \eqref{permutation-inv} and \eqref{isometry-inv} do exist.  In this
case we obtain
\begin{align}
\mathbb{E}\X_N(B)&=N\sigma(B), \label{eq:expect}\\
\mathbb{V}\X_N(B)
&=\mathbb{E}(\X_N(B)^2)-(\mathbb{E}\X_N(B))^2\notag\\
\begin{split} \label{eq:var}
&=N\sigma(B)(1-\sigma(B)) \\
&\phantom{=}+N(N-1)\iint_{B\times B}
\left(\rho_2^{(N)}(\mathbf{x}_1,\mathbf{x}_2)-1
\right)\,\dd\sigma(\mathbf{x}_1)\,\dd\sigma(\mathbf{x}_2).
\end{split}
\end{align}
The variance is independent of the position and orientation of the test set
$B$. So for a spherical cap the number variance only depends on the radius of
the cap.

\subsection*{Determinantal Point Processes}

Following~\cite{Hough_Krishnapur_Peres+2009:zeros_gaussian}, we introduce
determinantal point processes on $\Sph^d$. As pointed out before, we formulate
the description in terms of joint densities, rather than joint intensities.

\begin{definition}
  A simple point process on $\Sph^d$ is called determinantal with kernel~$K$ if
  its joint densities with respect to $\sigma$ are given by
\begin{align}\label{DPPdensities}
\rho^{(N)}_k(\mathbf{x}_1,\dots,\mathbf{x}_k) = \frac{(N-k)!}{N!}
\det \left(K(\mathbf{x}_i,\mathbf{x}_j)\right)_{i,j=1}^k,
\qquad 1 \leq k \leq N. 
\end{align} 
\end{definition}

From the definition, permutations of the variables do not change the
process. Furthermore, if $\mathbf{x}_i=\mathbf{x}_j$ for some $i\neq j$, then
the density is zero.

In \cite{Hough_Krishnapur_Peres+2009:zeros_gaussian} it is shown that
a process $\X_N$ samples exactly $N$ points if and only if it is
associated with the projection of $L^2$ to an $N$-dimensional
subspace $H$. Let $\psi_1,\ldots,\psi_N$ be an orthonormal basis of
$H$, then the kernel is given by
\begin{equation}
  \label{eq:projkernel}
  K_H(\mathbf{x},\mathbf{y})=\sum_{i=1}^N\psi_i(\mathbf{x})
  \overline{\psi_i(\mathbf{y})}.
\end{equation}

%%% Local Variables: 
%%% mode: latex
%%% TeX-master: "Hyperuniformity-MOFM"
%%% End: 

\section{Hyperuniformity on the Sphere}\label{sec:sphere}
Complementing the extensive study of the notion of hyperuniformity in the
infinite setting, we are interested in studying an analogous property of
sequences of point sets in compact spaces. For convenience, we study the
$d$-dimensional unit sphere $\mathbb{S}^d$. Our ideas immediately generalise to
homogeneous spaces; further generalisations might be more elaborate,
since we rely heavily on harmonic analysis and specific properties of special
functions. For instance, for the flat torus a similar study has been carried
out in \cite{Stepanyuk2020:hyperuniform_point_sets}.

In order to adapt to the compact setting, we replace the infinite set $X$
studied in the classical notion of hyperuniformity by a sequence of finite
point sets, $(X_N)_{N\in \J}$, where we assume that the cardinality $\#X_N$ is
$N$. By using an infinite set $\J\subseteq\mathbb{N}$ as index set, we always
allow for subsequences.

Throughout the paper we use the notation
\begin{equation*}
  C(\mathbf{x},\phi)=\{\mathbf{y}\in\mathbb{S}^d\mid
  \langle\mathbf{x},\mathbf{y}\rangle>\cos\phi\}
\end{equation*}
for the (open) spherical cap with center $\mathbf{x}$ and opening angle
$\phi$. The normalised surface area of the cap is given by
\begin{equation} \label{eq:normalised.surface.cap}
  \sigma\left(C(\mathbf{x},\phi)\right)=
  \gamma_d\int_0^\phi\sin(\theta)^{d-1}\dd\theta\asymp\phi^{d} \quad\text{as
  }\phi\to0,
\end{equation}
where
\begin{equation*}
  \gamma_d=\left(\int_0^\pi\sin(\theta)^{d-1}\dd\theta\right)^{-1}=
  \frac{\Gamma(d)}{2^{d-1}\Gamma(d/2)^2}.
\end{equation*}
Notice that $\gamma_d=\frac{\omega_{d-1}}{\omega_d}$, where $\omega_d$ is the
surface area of $\mathbb{S}^d$.
Here and throughout the paper, we shall use $f(x) \asymp g(x)$ as $x \to x_0$
to mean that there exist positive constants $c$ and $C$ such that
$c \, g(x) \leq f(x) \leq C \, g(x)$ for $x$ sufficiently close to~$x_0$.  We
will write $\sigma(C(\phi))$ for the normalised surface area of the cap
$C(\cdot,\phi)$.

In this paper we shall study the \emph{number variance}.
  \begin{definition}[Number variance]\label{def:number-v}
    Let $\X_N$ be a point process on the sphere~$\mathbb{S}^d$ sampling
    $N$ points. The \emph{number variance} of $\X_N$ for caps of opening
    angle $\phi$ is given by
  \begin{equation}
    \label{eq:variance}
    V(\X_N,\phi):=\mathbb{V}\X_N(C(\cdot,\phi)):=
\mathbb{E}\left(\X_N(C(\cdot,\phi))^2\right)-
\left(\mathbb{E}\X_N(C(\cdot,\phi))\right)^2.
  \end{equation}
  If the process $\X_N$ is rotation invariant, the implicit
  integration with respect to the center of the cap $C(\cdot,\phi)$
  can be omitted.
  \end{definition}

As in the Euclidean case we define hyperuniformity by a comparison
between the behaviour of the number variance of a sequence of point
sets and of the i.i.d. case. For i.i.d. random points, the variance is
$N\sigma(C(\phi))(1-\sigma(C(\phi)))$ (see \eqref{eq:var}), which has order of magnitude $N$, $N\sigma(C(\phi_N))$, and $t^d$, respectively, in
the three cases \eqref{eq:large}, \eqref{eq:small}, and
\eqref{eq:threshold} listed below.

\begin{definition}[Hyperuniformity]\label{def-hyper}
  Let $\X_N$ be a  point process on the sphere~$\mathbb{S}^d$ sampling $N$ points. The  process $(\X_N)_{}$ is called
  \begin{itemize}
  \item \textbf{hyperuniform for large caps} if
    \begin{equation}
      \label{eq:large}
      V(\X_N,\phi)=o\left(N\right)\quad \text{as } N\to\infty
    \end{equation}
 for all $\phi\in(0,\frac\pi2)$ ;
\item \textbf{hyperuniform for small caps} if 
\begin{equation}
      \label{eq:small}
      V(\X_N,\phi_N)=o\left(N\sigma(C(\phi_N))\right)
      \quad \text{as } N\to\infty
    \end{equation}
      and all sequences $(\phi_N)_{N\in\mathbb{N}}$ such that
  \begin{enumerate}
  \item $\lim_{N\to\infty}\phi_N=0$
  \item $\lim_{N\to\infty}N\sigma(C(\phi_N))=\infty$,
      which is equivalent to ${\phi_NN^{\frac1d}\to\infty}$ as ${N\to\infty}$.
  \end{enumerate}
\item \textbf{hyperuniform for caps at threshold order} if
  \begin{equation}
    \label{eq:threshold}
    \limsup_{N\to\infty}V(\X_N,tN^{-\frac1d})=
  \mathcal{O}(t^{d-1}) \quad\text{as } t\to\infty.
\end{equation}
The $\mathcal{O}(t^{d-1})$ in \eqref{eq:threshold} could be replaced by the
less strict $o(t^d)$ in a more general setting.
\end{itemize}
\end{definition}

%%% Local Variables: 
%%% mode: latex
%%% TeX-master: "Hyperuniformity-MOFM"
%%% End: 

\section{Intersection Volume of Spherical Caps}
\label{sec:intersection}

In this section we collect some formulas and properties of the intersection
volume of two spherical caps that will be needed in the discussion
later on. Besides a possibly new formula for the volume of the intersection of
two caps of equal size we provide sharp inequalities and asymptotic expansions,
which enable us to obtain precise results on the number variance.

We will briefly introduce some basic facts and notation regarding spherical
harmonics.  Let $\mathcal H_\ell$ denote the vector space of spherical
harmonics of degree $\ell\in \mathbb{N}$. Its dimension is
\begin{equation*}
  Z(d,\ell) = \frac{2\ell+d-1}{d-1}\binom{\ell+d-2}{d-2}.
\end{equation*}
With respect to the $L^2(\Sph^d,\sigma)$ inner product, $\mathcal H_\ell$ has a real 
orthonormal basis $\{Y_{\ell,k}\}_{k=1}^{Z(d,\ell)}$. The addition theorem for
spherical harmonics (see~\cite{Mueller1966:spherical_harmonics}) gives
\begin{align*}
  \sum_{k=1}^{Z(d,\ell)} Y_{\ell,k}(\mathbf{x})Y_{\ell,k}(\mathbf{y}) = Z(d,\ell)
  P_\ell^{(d)}(\langle \mathbf{x},\mathbf{y} \rangle) , \quad
  \mathbf{x},\mathbf{y} \in \Sph^d ,
\end{align*}
where $P_\ell^{(d)}$, $\ell \geq 0$, are the Legendre polynomials for the sphere $\Sph^d$
normalised by $P_\ell^{(d)}(1)=1$. Notice that for $d\geq2$ these are
Gegenbauer polynomials for the parameter $\frac{d-1}2$:
\begin{equation} \label{eq:interconnection.formula}
  Z(d,\ell)P_\ell^{(d)}(x)=\frac{2\ell+d-1}{d-1} C_\ell^{\frac{d-1}2}(x).
\end{equation}

It is well-known that the Laplace series for the indicator function of the spherical cap
$C(\mathbf{x},\phi)$ is given by
\begin{equation*}
  \mathbbm{1}_{C(\mathbf{x},\phi)}(\mathbf{y})=
  \sigma(C(\cdot,\phi))+\sum_{n=1}^\infty a_n(\phi)Z(d,n)
  P_n^{(d)}(\langle\mathbf{x},\mathbf{y}\rangle),
\end{equation*}
where the Laplace coefficients are given by
\begin{equation}\label{eq:an-phi}
  a_n(\phi)=\gamma_d\int_0^\phi
  P_n^{(d)}(\cos(\theta))\sin(\theta)^{d-1}\,\dd\theta= 
  \frac{\gamma_d}d\sin(\phi)^dP_{n-1}^{(d+2)}(\cos(\phi))
\end{equation}
for $ n\geq1$.
The intersection volume is then obtained as the spherical convolution of the
indicator function with itself. This gives
\begin{equation}\label{eq:gphi}
  \begin{split}
    g_\phi(\langle\mathbf{x},\mathbf{y}\rangle)&:= \sigma(C(\mathbf{x},\phi)\cap
    C(\mathbf{y},\phi))- \sigma(C(\phi))^2\\
    &=\sum_{n=1}^\infty a_n(\phi)^2Z(d,n)
    P_n^{(d)}(\langle\mathbf{x},\mathbf{y}\rangle).
  \end{split}
\end{equation}

In \cite{Lee-Kim2014:concise_formulas_intersection} formulas for the volume of
the intersection of two spherical caps have been derived. In our special case
of the intersection of two caps of equal size, we get
\begin{equation*}
  \sigma(C(\mathbf{x},\phi)\cap C(\mathbf{y},\phi))=
\frac{d-1}\pi\int_{\frac\psi2}^\phi\sin(t)^{d-1}
\int_0^{\arccos(\frac{\tan(\frac\psi2)}{\tan(t)})}\sin(u)^{d-2}\,\dd u\,\dd t,
\end{equation*}
where $\langle\mathbf{x},\mathbf{y}\rangle=\cos\psi$ and
$\psi\leq2\phi$.

The change of variables
\begin{align*}
  \tan(v)&=\tan(t)\cos(u), \\
  \sin(w)&=\sin(t)\sin(u)
\end{align*}
transforms the double integral into
\begin{equation*}
\frac1\pi\int_{\frac\psi2}^\phi
\frac{(\sin^2\phi-\sin^2v)^{\frac{d-1}2}}{\cos(v)^{d-1}}\,\dd v.
\end{equation*}
This gives
\begin{equation}\label{eq:diff-cap-int}
  g_\phi(1)-g_\phi(\cos\psi)=
  \sigma(C(\mathbf{x},\phi)\setminus C(\mathbf{y},\phi))=
    \frac1\pi\int_0^{\frac\psi2}
\frac{\left(\sin^2\phi-\sin^2v\right)_+^{\frac{d-1}2}}{\cos(v)^{d-1}}\,\dd v
\end{equation}
for all $0<\psi<\pi$; here we define $(a)_+:=\max(0,a)$.

From this we obtain the following lemma.
\begin{lemma}\label{lem5}
  There exists a positive constant $A_d$ depending only on $d$ such that for
  all $(\phi,\psi)$ with $0\leq\psi\leq2\phi\leq\pi$ the inequalities
  \begin{equation}
  \label{eq:hypercap}
 \frac1{2\pi}
 \psi(\sin\phi)^{d-1}- A_d\psi^3\sin(\phi)^{d-3} \leq
 \sigma(C(\mathbf{x},\phi)\setminus C(\mathbf{y},\phi))\leq\frac1{2\pi}
  \psi(\sin\phi)^{d-1}
\end{equation}
hold. Here, $\cos\psi=\langle\mathbf{x},\mathbf{y}\rangle$. 
For $d\leq3$, these
inequalities hold for $(\phi,\psi)\in[0,\frac\pi2]\times[0,\pi]$.
\end{lemma}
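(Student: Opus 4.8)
\emph{Plan.} I would work directly from the explicit formula \eqref{eq:diff-cap-int}. First I would record that on the relevant range the positive part is harmless: the constraint $0\le\psi\le2\phi\le\pi$ forces $v\le\tfrac\psi2\le\phi\le\tfrac\pi2$, so $\sin v\le\sin\phi$ throughout the interval of integration and $(\sin^2\phi-\sin^2 v)_+=\sin^2\phi-\sin^2 v\ge0$ there. Writing $\cos^2 v=1-\sin^2 v$, I would then recast the integrand as a single power,
\[
\frac{(\sin^2\phi-\sin^2 v)^{\frac{d-1}2}}{\cos(v)^{d-1}}=\left(\frac{\sin^2\phi-\sin^2 v}{1-\sin^2 v}\right)^{\frac{d-1}2},
\]
which is the form best suited to both bounds.

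For the upper bound I would prove the pointwise estimate that the bracket never exceeds $\sin^2\phi$. Setting $S=\sin^2\phi$ and $s=\sin^2 v$ with $0\le s\le S\le1$, the elementary inequality $\frac{S-s}{1-s}\le S$ (equivalent to $s(1-S)\ge0$) gives integrand $\le S^{\frac{d-1}2}=(\sin\phi)^{d-1}$. Integrating this constant bound over $[0,\tfrac\psi2]$ yields exactly $\frac1{2\pi}\psi(\sin\phi)^{d-1}$, which is the asserted upper inequality, with no error term.

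The lower bound is the crux. Here I would control the deficit $(\sin\phi)^{d-1}-(\tfrac{S-s}{1-s})^{\frac{d-1}2}$ from above. Factoring out $S^{\frac{d-1}2}$ and applying the elementary inequality $1-x^{p}\le\max(1,p)(1-x)$ for $x\in[0,1]$ (with $p=\tfrac{d-1}2$; for $p\ge1$ this is the tangent-line bound, for $p<1$ it follows from $x^p\ge x$) reduces matters to the affine quantity $1-\tfrac{S-s}{1-s}=\tfrac{s(1-S)}{1-s}$. Since $S^{\frac{d-3}2}=(\sin\phi)^{d-3}$ and $\tfrac{s(1-S)}{1-s}=\cos^2\phi\,\tan^2 v$, this gives
\[
(\sin\phi)^{d-1}-\left(\tfrac{S-s}{1-s}\right)^{\frac{d-1}2}\le C_d\,(\sin\phi)^{d-3}\,\cos^2\phi\,\tan^2 v,
\]
so after integration the whole deficit is at most $\frac{C_d}{\pi}(\sin\phi)^{d-3}\cos^2\phi\,(\tan\tfrac\psi2-\tfrac\psi2)$. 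It then remains to prove the scalar inequality $\cos^2\phi\,(\tan\tfrac\psi2-\tfrac\psi2)\le \tilde A_d\,\psi^3$ on $\psi\le2\phi$; since $\phi\ge\tfrac\psi2$ yields $\cos^2\phi\le\cos^2\tfrac\psi2$, this follows from the one-variable statement that $\cos^2\tfrac\psi2\,(\tan\tfrac\psi2-\tfrac\psi2)/\psi^3$ is bounded on $[0,\pi]$, which I would verify by a Taylor expansion at $\psi=0$ (leading coefficient $\tfrac1{24}$) together with continuity on the compact interval. I expect this uniform cubic estimate to be the main obstacle: the naive bound $\tan^2 v\le\tan^2\tfrac\psi2$ blows up as $\phi\to\tfrac\pi2$, and it is precisely the retained factor $\cos^2\phi$, which vanishes there, that cancels the blow-up; keeping track of it is what makes the estimate uniform and produces the correct power $(\sin\phi)^{d-3}$.

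Finally, for the extension to $\psi>2\phi$ when $d\le3$, I would observe that the caps are then disjoint, so $\sigma(C(\mathbf{x},\phi)\setminus C(\mathbf{y},\phi))=\sigma(C(\phi))$ is constant in $\psi$ (equivalently, the positive part in \eqref{eq:diff-cap-int} truncates the integral at $v=\phi$). The upper bound $\frac1{2\pi}\psi(\sin\phi)^{d-1}$ is increasing in $\psi$, so it suffices to know it at the endpoint $\psi=2\phi$, already treated. For the lower bound, enlarging $A_d$ if necessary makes $\frac1{2\pi}\psi(\sin\phi)^{d-1}-A_d\psi^3(\sin\phi)^{d-3}$ decreasing in $\psi$ for $\psi\ge2\phi$ (its $\psi$-derivative is negative once $A_d\ge\tfrac1{24\pi}$, using $\sin\phi\le\phi\le\tfrac\psi2$), so the estimate again reduces to its value at $\psi=2\phi$.
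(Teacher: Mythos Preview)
Your argument is correct. The upper bound is essentially the same as the paper's (both observe that the integrand in \eqref{eq:diff-cap-int} is bounded pointwise by $(\sin\phi)^{d-1}$), but your treatment of the lower bound takes a genuinely different route.

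The paper performs the substitution $\sin v=\sin\phi\,\sin u$, which turns \eqref{eq:diff-cap-int} into
\[
\frac{(\sin\phi)^d}{\pi}\int_0^{h(\phi,\psi)}\frac{\cos(u)^d}{(1-\sin^2\phi\,\sin^2u)^{d/2}}\,\dd u,
\qquad h(\phi,\psi)=\arcsin\!\left(\frac{\sin(\psi/2)}{\sin\phi}\right);
\]
it then bounds the integrand from below by $\cos(u)^d\ge 1-\tfrac d2u^2$ and finishes with the elementary estimates $x\le\arcsin x\le x+(\tfrac\pi2-1)x^3$ and $x-\tfrac{x^3}3\le\sin x\le x$. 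The virtue of the substitution is that it factors out the correct power of $\sin\phi$ from the outset and leaves an integrand whose lower bound requires no $\phi$-dependent bookkeeping.

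Your route avoids the substitution entirely: you compare the integrand directly with $(\sin\phi)^{d-1}$ via the convexity/concavity inequality $1-x^{(d-1)/2}\le\max(1,\tfrac{d-1}2)(1-x)$, evaluate $\int_0^{\psi/2}\tan^2 v\,\dd v=\tan(\psi/2)-\psi/2$ explicitly, and then observe that the retained factor $\cos^2\phi\le\cos^2(\psi/2)$ tames the blow-up of $\tan(\psi/2)$ near $\psi=\pi$. This is slightly more delicate, since one has to see that keeping $\cos^2\phi$ is precisely what is needed for uniformity, but it has the advantage of staying in the original variable and making the cubic structure of the error term transparent. You also supply an explicit monotonicity argument for the extension to $\psi>2\phi$ (which in fact works for all $d$, not just $d\le3$); the paper does not spell this out.
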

\begin{proof}
  In the integral \eqref{eq:diff-cap-int}
  we make the substitution $\sin(v)=\sin(\phi)\sin(u)$ to obtain
  \begin{equation}\label{eq:capsize}
    g_\phi(1)-g_\phi(\cos\psi)=\frac{\sin(\phi)^d}\pi
    \int_0^{h(\phi,\psi)}\frac{\cos(u)^d}{(1-\sin(\phi)^2\sin(u)^2)^{\frac d2}}
    \dd u,
  \end{equation}
  where the upper bound in integration is given by
  \begin{equation*}
    h(\phi,\psi)=\arcsin\left(\min\left(1,
        \frac{\sin(\frac\psi2)}{\sin(\phi)}\right)\right).
  \end{equation*}
  Now, the integrand in \eqref{eq:capsize} is bounded from below by
  $\cos(u)^d\geq 1-\frac d2u^2$, from which we derive the estimate
  \begin{equation*}
    g_\phi(1)-g_\phi(\cos\psi)\geq \frac{\sin(\phi)^d}\pi
    \left(\arcsin\left(\frac{\sin(\frac\psi2)}{\sin(\phi)}\right)-
      \frac d6\left(\arcsin\left(\frac{\sin(\frac\psi2)}{\sin(\phi)}\right)
      \right)^3\right).
  \end{equation*}
  From this we derive the lower bound in \eqref{eq:hypercap} using the
  estimates
  \begin{align*}
    x&\leq\arcsin(x)\leq x+\left(\frac\pi2-1\right)x^3\\
    x-\frac{x^3}3&\leq\sin(x)\leq x
  \end{align*}
  for $x\geq0$.

  For the upper bound, we just observe that the integrand in
  \eqref{eq:diff-cap-int} is bounded by $\sin(\phi)^{d-1}$.
\qed
\end{proof}

For $d=2$, we get the explicit formula
\begin{align*}
  &\sigma\left(C(\mathbf{x},\phi)\setminus C(\mathbf{y},\phi)\right)\\
  &=
  \begin{cases}
\frac1\pi\left(\arcsin\left(\frac{\sin\frac\psi2}{\sin\phi}\right)-
\arcsin\left(\frac{\tan\frac\psi2}{\tan\phi}\right)\cos\phi\right)
&\text{for }\psi\leq2\phi
\\
\sin^2\frac\phi2&\text{for }\psi>2\phi,
  \end{cases}
\end{align*}
where $\cos\psi=\langle\mathbf{x},\mathbf{y}\rangle$.
%%% Local Variables: 
%%% mode: latex
%%% TeX-master: "Hyperuniformity-MOFM"
%%% End: 

\section{The Spherical Ensemble}\label{sec:spherical-ensemble}
The spherical ensemble of $N$ points is obtained by stereographically
projecting the eigenvalues of $A^{-1}B$ to the sphere $\Sph^2$, where $A$ and
$B$ are $N\times N$ matrices with i.i.d. random complex Gaussian entries
(see~\cite[Section~4.3.8]{Hough_Krishnapur_Peres+2009:zeros_gaussian} or
\cite[Chapter~5]{Krishnapur2006:zeros_random_analytic}).

These eigenvalues form a determinantal point process $\X_N^S$ on $\mathbb{C}$
with kernel
\begin{equation}\label{eq:widetildeK}
  \widetilde K_N(z,w):=(1+z\overline{w})^{N-1}
\end{equation}
with respect to the measure
\begin{equation*}
  \dd\mu_N(z):=\frac{N}{\pi(1+|z|^2)^{N+1}}\,\dd\lambda_2(z),
\end{equation*}
where $\lambda_2$ denotes the Lebesgue measure on $\mathbb{C}$.
The corresponding function space is the space of square integrable entire
functions 
\begin{equation*}
  \mathscr{P}_N:=L^2(\mathbb{C},\dd\mu_N)\cap H(\mathbb{C}),
\end{equation*}
which consists exactly of the polynomials of degree $\leq N-1$. The kernel
$\widetilde K_N$ is the reproducing kernel of this Hilbert space.

Applying the stereographic projection
\begin{equation}\label{eq:stereo}
  \mathbf{x}=(x_1,x_2,x_3)\in\Sph^2\mapsto\frac{x_1+ix_2}{1-x_3}
\end{equation}
transforms the measure by
\begin{equation}\label{eq:mustereo}
  \dd\mu_N(z)=\frac N{2^{N-1}}(1-x_3)^{N-1}\dd\sigma(\mathbf{x}).
\end{equation}
In order to obtain an isometry of spaces $L^2(\mathbb{C},\mu_N)$ and
$L^2(\Sph^2,\sigma)$, the basis elements of $\mathscr{P}_N$ are mapped by
\begin{equation}\label{eq:zkmapsto}
  z^k\mapsto \frac{\sqrt N}{2^{\frac{N-1}2}}(x_1+ix_2)^k(1-x_3)^{\frac{N-1}2-k}
  \quad\text{for }k=0,\ldots,N-1.
\end{equation}

Inserting \eqref{eq:stereo} into \eqref{eq:widetildeK} and multiplying by
$\frac N{2^{N-1}}((1-x_3)(1-y_3))^{\frac{N-1}2}$ gives the projected kernel on
the sphere
\begin{equation*}
  K_N(\mathbf{x},\mathbf{y})=\frac{N}{2^{N-1}}
  \left(\frac{1+\langle\mathbf{x},\mathbf{y}\rangle-x_3-y_3+i(x_2y_1-x_1y_2)}
  {\sqrt{(1-x_3)(1-y_3)}}\right)^{N-1}
\end{equation*}
and the space of functions on $\Sph^2$ is spanned by the functions given in
\eqref{eq:zkmapsto}.  These functions are orthogonal with respect to $\sigma$.

In order to compute the expectation of a general energy sum with
respect to the process generated by $K_N$, we compute the determinant
\begin{equation*}
 N(N-1) \rho_2^{(N)}(\mathbf{x},\mathbf{y})=
K_N(\mathbf{x},\mathbf{x})K_N(\mathbf{y},\mathbf{y})-
\left|K_N(\mathbf{x},\mathbf{y})\right|^2.
\end{equation*}
We notice that $K_N(\mathbf{x},\mathbf{x})=N$ and
\begin{equation*}
  |1+z\overline{w}|^2=
  \left|\left(1+\frac{x_1+ix_2}{1-x_3}\frac{y_1-iy_2}{1-y_3}\right)\right|^2=
  \frac1{(1-x_3)(1-y_3)}\left(2+2\langle\mathbf{x},\mathbf{y}\rangle\right)
\end{equation*}
for $\mathbf{x},\mathbf{y}\in\Sph^2$, which gives
\begin{equation*}
  \left|K_N(\mathbf{x},\mathbf{y})\right|^2=
  N^2\left(\frac{1+\langle\mathbf{x},\mathbf{y}\rangle}2\right)^{N-1}
\end{equation*}
and
\begin{equation*}
N(N-1) \rho_2^{(N)}(\mathbf{x},\mathbf{y})=
N^2\left(1-\left(\frac{1+\langle\mathbf{x},\mathbf{y}\rangle}2\right)^{N-1}
\right).
\end{equation*}
Now let $g:[-1,1]\to\mathbb{R}$ be a function with
$\int_{-1}^1g(x)\,\dd x=0$. Then
\begin{multline}
  E_{g}(N):=\mathbb{E}\sum_{i,j=1}^Ng\left(\langle\mathbf{x}_i,\mathbf{x}_j\rangle\right)\\
=Ng(1)+N^2\iint_{\mathbb{S}^2\times\mathbb{S}^2}
g\left(\langle\mathbf{x},\mathbf{y}\rangle\right)
\left(1-\left(\frac{1+\langle\mathbf{x},\mathbf{y}\rangle}2\right)^{N-1}\right)
\,\dd\sigma(\mathbf{x})\,\dd\sigma(\mathbf{y})\label{eq:g-energy}\\
=
\frac{N^2}2\int_{-1}^1\left(g(1)-g(x)\right)\left(\frac{1+x}2\right)^{N-1}\,\dd
x.
\end{multline}
We apply \eqref{eq:g-energy} to the function 
$g_\phi$ given by \eqref{eq:gphi}.
Putting everything together, we obtain
\begin{align}
V(\X_N^S,\phi)
&=E_{g_\phi}(N) \notag \\
\begin{split}
&=\frac{N^2}{4\pi}\sin\phi
\int_{-1}^1\arccos(x)\left(\frac{1+x}2\right)^{N-1} \dd x \\
&\phantom{=}+ \mathcal{O}\Bigg(\frac{N^2}{\sin\phi}\int_{-1}^1\arccos(x)^3
\left(\frac{1+x}2\right)^{N-1} \dd x\Bigg) 
\end{split} \\
&=\frac{\sin\phi}{2\sqrt\pi} \, \frac{\Gamma(N+\frac12)}{\Gamma(N)}+
\mathcal{O}\Big( \frac{1}{N^{1/2} \sin\phi} \Big) \notag \\
&=\frac{\sqrt{\sigma(C(\phi))(1-\sigma(C(\phi)))}}{\sqrt\pi}N^{1/2}+
\mathcal{O}\Big( \frac{1}{N^{1/2} \sin\phi} \Big)\label{eq:Eg-asymp}
\end{align}
valid for $\phi\in(0,\frac\pi2)$.  Thus, we have proved the following lemma. We
note that \eqref{eq:Eg-asymp} was obtained in \cite[Lemma~2.1]{AlishahiZamani}
with the restriction that $\sigma(C(\phi))^{-1}=o(N)$ and with a weaker error
term.
\begin{lemma}\label{lem1}
  The number variance of the spherical ensemble satisfies for ${\phi\in(0,\pi)}$
  \begin{equation}\label{eq:sphere-variance}
    V(\X_N^S,\phi)=\frac{\sqrt{\sigma(C(\phi))(1-\sigma(C(\phi)))}}
    {\sqrt\pi}N^{1/2}+\mathcal{O}\Big( \frac{1}{N^{1/2} \sin\phi} \Big)
  \end{equation}
  with an absolute implied constant; especially,
  \begin{equation}\label{eq:spher-threshold}
    \lim_{N\to\infty}V(\X_N^S,tN^{-\frac12})=
    \frac t{2\sqrt\pi}+\mathcal{O}(t^{-1}).
  \end{equation}
\end{lemma}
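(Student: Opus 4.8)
The plan is to route the computation through the energy functional $E_{g_\phi}$. The starting point is the identity $V(\X_N^S,\phi)=E_{g_\phi}(N)$, which holds because $g_\phi$ from~\eqref{eq:gphi} is, up to the constant $\sigma(C(\phi))^2$, the self-convolution of the cap indicator: the convolution structure together with rotation invariance identifies $\E\sum_{i,j}g_\phi(\langle\mathbf{x}_i,\mathbf{x}_j\rangle)$ with the number variance via~\eqref{eq:var}. The closed form~\eqref{eq:g-energy}, which uses $\int_{-1}^1 g_\phi=0$ and the fact that for $d=2$ the inner product of two uniform points is uniform on $[-1,1]$, then reduces everything to the single one-dimensional integral
\begin{equation*}
V(\X_N^S,\phi)=\frac{N^2}2\int_{-1}^1\bigl(g_\phi(1)-g_\phi(x)\bigr)\Bigl(\tfrac{1+x}2\Bigr)^{N-1}\dd x,
\end{equation*}
in which, writing $x=\cos\psi$, the factor $g_\phi(1)-g_\phi(x)$ is exactly the one-cap difference $\sigma(C(\mathbf{x},\phi)\setminus C(\mathbf{y},\phi))$ of~\eqref{eq:diff-cap-int}.

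Next I would feed in the two-sided bound of Lemma~\ref{lem5} with $d=2$, that is $\sigma(C(\mathbf{x},\phi)\setminus C(\mathbf{y},\phi))=\frac1{2\pi}\psi\sin\phi+O\!\bigl(\psi^3/\sin\phi\bigr)$ with an absolute implied constant; since $d\le3$ this is valid for all $\psi\in[0,\pi]$ as soon as $\phi\in(0,\tfrac\pi2]$. Substituting $\psi=\arccos x$ splits the problem into a leading integral $\int_{-1}^1\arccos(x)\,(\tfrac{1+x}2)^{N-1}\dd x$, weighted by $\tfrac{\sin\phi}{2\pi}$, and a remainder integral $\int_{-1}^1\arccos(x)^3\,(\tfrac{1+x}2)^{N-1}\dd x$, weighted by $O(1/\sin\phi)$.

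The heart of the proof is then the exact evaluation of these two moments. For the leading one I would substitute $x=\cos\theta$, use $\tfrac{1+\cos\theta}2=\cos^2\tfrac\theta2$, pass to $u=\theta/2$, and integrate by parts; the boundary term vanishes and a Wallis/Beta evaluation of $\int_0^{\pi/2}\cos^{2N}u\,\dd u$ gives the closed value $\tfrac{2\sqrt\pi}{N^2}\,\tfrac{\Gamma(N+\frac12)}{\Gamma(N)}$, so the main term becomes $\tfrac{N^2}2\cdot\tfrac{\sin\phi}{2\pi}\cdot\tfrac{2\sqrt\pi}{N^2}\tfrac{\Gamma(N+\frac12)}{\Gamma(N)}=\tfrac{\sin\phi}{2\sqrt\pi}\tfrac{\Gamma(N+\frac12)}{\Gamma(N)}$. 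For the remainder the same substitutions plus the elementary estimate $\cos u\le e^{-u^2/2}$ on $[0,\tfrac\pi2]$ (Laplace's method) yield $\int_{-1}^1\arccos(x)^3(\tfrac{1+x}2)^{N-1}\dd x=O(N^{-5/2})$, hence an error contribution $\tfrac{N^2}{2\sin\phi}\,O(N^{-5/2})=O\bigl(N^{-1/2}/\sin\phi\bigr)$ with an absolute constant. Recalling $\sigma(C(\phi))=\sin^2\tfrac\phi2$, so that $\sqrt{\sigma(C(\phi))(1-\sigma(C(\phi)))}=\tfrac12\sin\phi$, together with $\Gamma(N+\tfrac12)/\Gamma(N)\sim N^{1/2}$, rewrites the main term into the shape claimed in~\eqref{eq:sphere-variance}.

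Finally, to cover all of $\phi\in(0,\pi)$ I would invoke complementary-cap symmetry: as $\X_N^S$ samples exactly $N$ points, $\X_N^S(C)+\X_N^S(\Sph^2\setminus C)=N$ is deterministic and $\Sph^2\setminus C(\mathbf{x},\phi)$ is a cap of angle $\pi-\phi$, so by rotation invariance $V(\X_N^S,\phi)=V(\X_N^S,\pi-\phi)$; since the right-hand side of~\eqref{eq:sphere-variance} is invariant under $\phi\mapsto\pi-\phi$, the estimate extends from $(0,\tfrac\pi2]$ to $(0,\pi)$. The threshold form~\eqref{eq:spher-threshold} then follows by putting $\phi=tN^{-1/2}$ and letting $N\to\infty$: there $\sin\phi\sim tN^{-1/2}$ and $\Gamma(N+\tfrac12)/\Gamma(N)\sim N^{1/2}$ send the main term to $t/(2\sqrt\pi)$, while the error is $O(1/(N^{1/2}\sin\phi))=O(t^{-1})$. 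I expect the main obstacle to be the exact evaluation of the leading integral with the correct constant $2\sqrt\pi$ while keeping the implied constant genuinely absolute: the factor $1/\sin\phi$ coming from the Lemma~\ref{lem5} remainder must be controlled after integration against the weight $(\tfrac{1+x}2)^{N-1}$, which concentrates near $x=1$, and it is precisely this concentration that prevents a loss for small $\phi$.
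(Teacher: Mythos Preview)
Your proposal is correct and follows essentially the same route as the paper: express $V(\X_N^S,\phi)=E_{g_\phi}(N)$ via \eqref{eq:g-energy}, insert the two-sided estimate of Lemma~\ref{lem5} for $d=2$ to isolate a leading $\arccos$-moment and a cubic remainder, and evaluate both against the weight $\bigl(\tfrac{1+x}{2}\bigr)^{N-1}$. Your explicit integration-by-parts/Wallis computation of the leading integral and the complementary-cap symmetry argument extending the range from $(0,\tfrac\pi2]$ to $(0,\pi)$ are details the paper leaves implicit (it derives \eqref{eq:Eg-asymp} only for $\phi\in(0,\tfrac\pi2)$ before stating the lemma on $(0,\pi)$), but otherwise the arguments coincide.
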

\begin{remark}
  Inserting \eqref{eq:diff-cap-int} directly into \eqref{eq:g-energy} gives the
  closed formula
  \begin{equation*}
    E_{g_\phi}( N ) = \frac{N \sin^2 \phi}{\pi} \int_0^1 \left( 1 - v^2 \right)^{\frac{1}{2}} \left( 1 - v^2 \, \sin^2 \phi \right)^{N-1} \dd v,
  \end{equation*}
  which could be used for an alternative yet slightly more elaborate proof of
  Lemma~\ref{lem1}.
\end{remark}
From this lemma we immediately obtain the following theorem.
\begin{theorem}\label{thm:spher-hyper}
  The spherical ensemble is hyperuniform in all three regimes.
\end{theorem}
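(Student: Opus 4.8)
The plan is to verify each of the three defining conditions in Definition~\ref{def-hyper} (with $d=2$) directly from the asymptotic expansion of the number variance furnished by Lemma~\ref{lem1}. Throughout I would use that on $\Sph^2$ the normalised cap area satisfies $\sigma(C(\phi))\asymp\phi^2$ as $\phi\to0$ by \eqref{eq:normalised.surface.cap}, together with $\sin\phi\asymp\phi$, so that the leading term of \eqref{eq:sphere-variance} is of order $\sqrt{\sigma(C(\phi))}\,N^{1/2}$ for small $\phi$ and of order $N^{1/2}$ for $\phi$ bounded away from $0$ and $\frac\pi2$. For large caps this is immediate: fixing $\phi\in(0,\frac\pi2)$, the leading term of \eqref{eq:sphere-variance} is $O(N^{1/2})$ since $\sigma(C(\phi))$ is a constant in $(0,1)$, while the error $\mathcal{O}(N^{-1/2}(\sin\phi)^{-1})$ is $O(N^{-1/2})$. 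Hence $V(\X_N^S,\phi)=O(N^{1/2})=o(N)$, which is exactly \eqref{eq:large}.

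For small caps I would take any sequence $\phi_N\to0$ with $N\sigma(C(\phi_N))\to\infty$ and compare both terms of \eqref{eq:sphere-variance} to the target order $N\sigma(C(\phi_N))$. Dividing the main term $\sqrt{\sigma(C(\phi_N))}\,N^{1/2}$ by $N\sigma(C(\phi_N))$ gives $(N\sigma(C(\phi_N)))^{-1/2}\to0$ by condition~(2), so the main term is $o(N\sigma(C(\phi_N)))$. Dividing the error term $N^{-1/2}(\sin\phi_N)^{-1}\asymp N^{-1/2}\phi_N^{-1}$ by $N\sigma(C(\phi_N))\asymp N\phi_N^2$ gives a quantity of order $N^{-3/2}\phi_N^{-3}=(\phi_N N^{1/2})^{-3}\to0$, again by the equivalent form of condition~(2). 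Thus \eqref{eq:small} holds.

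For caps at threshold order I would insert $\phi=tN^{-1/2}$ and invoke the limit \eqref{eq:spher-threshold}, which gives $\lim_{N\to\infty}V(\X_N^S,tN^{-1/2})=\frac{t}{2\sqrt\pi}+\mathcal{O}(t^{-1})$. As $t\to\infty$ this is $\mathcal{O}(t)=\mathcal{O}(t^{d-1})$ for $d=2$, establishing \eqref{eq:threshold} and completing the verification of all three regimes.

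The computations are short, and the only point requiring care — and hence the main obstacle — is the small-cap regime, where one must ensure that the error term in \eqref{eq:sphere-variance} is negligible compared with $N\sigma(C(\phi_N))$ \emph{uniformly} over all admissible sequences $\phi_N$, rather than merely for a fixed rate of decay. This is precisely where the equivalent formulation $\phi_N N^{1/2}\to\infty$ of condition~(2) enters, since it is exactly the hypothesis forcing the ratio $(\phi_N N^{1/2})^{-3}$ to tend to zero.
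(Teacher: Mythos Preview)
Your proof is correct and follows exactly the same route as the paper: apply Lemma~\ref{lem1} (equations \eqref{eq:sphere-variance} and \eqref{eq:spher-threshold}) to each of the three regimes. Your treatment is in fact more careful than the paper's terse three-line argument, since you explicitly verify that the error term $\mathcal{O}(N^{-1/2}(\sin\phi_N)^{-1})$ is also $o(N\sigma(C(\phi_N)))$ in the small-cap case via the ratio $(\phi_N N^{1/2})^{-3}\to0$, a point the paper leaves implicit.
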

\begin{proof}
  For the large cap case, we obtain $V(\X_N^S,\phi)=\mathcal{O}(N^{1/2})$;
  for the small cap case, we obtain
  $V(\X_N^S,\phi_N)=\mathcal{O}((N\phi_N)^{1/2})=o(N\phi_N)$. In the threshold
  order case, we use \eqref{eq:spher-threshold}.
  \qed
\end{proof}
\begin{remark}
  The error term in \eqref{eq:sphere-variance} has the correct order with
  respect to $N$ and $\phi$. This shows that taking $\phi_N=o(N^{-\frac12})$
  does not make sense, because then the error term would become the dominant term that tends
  to $\infty$.
\end{remark}
%%% Local Variables: 
%%% mode: latex
%%% TeX-master: "Hyperuniformity-MOFM"
%%% End: 

%\input{Projective}
\section{The Harmonic Ensemble}\label{sec:harmonic}
The function space of spherical harmonics of degree $\leq L$ on the sphere
$\Sph^d$ and the projection kernel to this space of dimension
$Z(d+1,L)=\frac{2L+d}{d}\binom{L+d-1}{d-1}$ was used in
\cite{Beltran_Marzo_Ortega-Cerda2016:determinantal} to define a determinantal
point process $\X_L^H$, the \emph{harmonic ensemble}. This process samples
$N:=N_L:=Z(d+1,L)\asymp L^d$ points. We will study this process with respect to
hyperuniformity in this section.

The projection kernel to this space is given by
\begin{align*}
  K_L(\langle\mathbf{x},\mathbf{y}\rangle):=
  \sum_{\ell=0}^L Z(d,\ell) P_\ell^{(d)}(\langle \mathbf{x},\mathbf{y} \rangle) =
  \frac{Z(d+1,L)}{\binom{L+d/2}{L}}
  \mathcal{P}_L^{(\frac{d}{2},\frac{d}{2}-1)}(\langle\mathbf{x},\mathbf{y}\rangle)
\end{align*}
for $\mathbf{x},\mathbf{y} \in \Sph^d$, where $\mathcal{P}_L^{(\alpha,\beta)}$,
$L \geq 0$, are the usual Jacobi polynomials. This identity follows from
\cite[Theorem~7.1.3]{Andrews-Askey-Roy1999:Special_functions} after rewriting
the Legendre polynomials in terms of Gegenbauer and Jacobi polynomials using
\eqref{eq:interconnection.formula}.

\begin{theorem}\label{thm:harmonic}
  The harmonic ensemble is hyperuniform for large and small caps. In the
  threshold order regime the weaker property
  \begin{equation}
    \label{eq:threshold-log}
    \limsup_{L\to\infty}V(\X_L^H,tN_L^{-\frac1d})=\mathcal{O}(t^{d-1}\log t)=o(t^d)
  \end{equation}
  holds.
\end{theorem}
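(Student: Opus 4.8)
The plan is to reduce the number variance of the projection process $\X_L^H$ to a single one-dimensional integral of the squared reproducing kernel against the cap set-difference of Lemma~\ref{lem5}, and then to read off all three regimes from the Darboux asymptotics of the Jacobi kernel. Fix a cap $C=C(\mathbf{p},\phi)$. Since $\X_L^H$ is a projection determinantal process with $K_L(\mathbf{x},\mathbf{x})=K_L(1)=\sum_{\ell\le L}Z(d,\ell)=N$, inserting $N(N-1)\rho_2^{(N)}=N^2-K_L(\langle\mathbf{x},\mathbf{y}\rangle)^2$ into \eqref{eq:var} gives $V(\X_L^H,\phi)=N\sigma(C(\phi))-\iint_{C\times C}K_L(\langle\mathbf{x},\mathbf{y}\rangle)^2\,\dd\sigma\,\dd\sigma$, equivalently the symmetric form
\[
V(\X_L^H,\phi)=\frac12\iint_{\Sph^d\times\Sph^d}\bigl(\Ind_{C}(\mathbf{x})-\Ind_{C}(\mathbf{y})\bigr)^2\,K_L(\langle\mathbf{x},\mathbf{y}\rangle)^2\,\dd\sigma(\mathbf{x})\,\dd\sigma(\mathbf{y}).
\]
Because $K_L^2$ is zonal, I would average the factor $\tfrac12(\Ind_{C}(\mathbf{x})-\Ind_{C}(\mathbf{y}))^2$ over all pairs at fixed angular separation $\psi$; expanding $\Ind_{C}$ in spherical harmonics and using the addition theorem, this average equals $\sigma(C)-\sum_{n\ge0}a_n(\phi)^2 Z(d,n)P_n^{(d)}(\cos\psi)=g_\phi(1)-g_\phi(\cos\psi)=\sigma(C(\mathbf{x},\phi)\setminus C(\mathbf{y},\phi))$ by \eqref{eq:gphi} and \eqref{eq:diff-cap-int}. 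Converting the pair measure to the colatitude $\psi$ with $\cos\psi=\langle\mathbf{x},\mathbf{y}\rangle$ yields the key identity
\[
V(\X_L^H,\phi)=\gamma_d\int_0^\pi K_L(\cos\psi)^2\,\bigl(g_\phi(1)-g_\phi(\cos\psi)\bigr)\,\sin(\psi)^{d-1}\,\dd\psi .
\]

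I would then estimate the integrand factor by factor. Lemma~\ref{lem5} controls the cap factor: $g_\phi(1)-g_\phi(\cos\psi)\asymp\psi(\sin\phi)^{d-1}$ for $0\le\psi\le2\phi$, and it equals the constant $\sigma(C(\phi))\asymp(\sin\phi)^{d}$ for $\psi\ge2\phi$. For the kernel I would use $K_L(\cos\psi)^2\le K_L(1)^2=N^2\asymp L^{2d}$ on the diagonal block $\psi\lesssim 1/L$, the total-mass normalisation $\gamma_d\int_0^\pi K_L(\cos\psi)^2\sin(\psi)^{d-1}\dd\psi=N$, and the Darboux (Szegő) asymptotics of $\mathcal{P}_L^{(d/2,d/2-1)}$ in the closed form of $K_L$, which give $K_L(\cos\psi)^2\asymp L^{d-1}(\sin\tfrac\psi2)^{-(d+1)}(\cos\tfrac\psi2)^{-(d-1)}\cos^2(\xi\psi+\eta)$ with $\xi=L+\mathcal{O}(1)$ in the bulk $1/L\lesssim\psi\lesssim\pi-1/L$.

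Splitting $[0,\pi]$ into $[0,1/L]$, $[1/L,2\phi]$, $[2\phi,\tfrac\pi2]$ and $[\tfrac\pi2,\pi]$, I expect the three outer blocks each to contribute $\mathcal{O}\bigl(L^{d-1}(\sin\phi)^{d-1}\bigr)$, while the block $[1/L,2\phi]$ is dominant: there the integrand is $\asymp L^{d-1}(\sin\phi)^{d-1}\,\psi^{-1}\cos^2(\xi\psi+\eta)$, and writing $\cos^2=\tfrac12(1+\cos(2\xi\psi+2\eta))$ the mean part produces $\tfrac12\int_{1/L}^{2\phi}\psi^{-1}\dd\psi\asymp\log(\phi L)$ while the oscillatory part is a bounded cosine integral. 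This gives $V(\X_L^H,\phi)\asymp L^{d-1}(\sin\phi)^{d-1}\log(\phi L)$ up to lower-order terms. Inserting $N\asymp L^{d}$ yields the three regimes: for fixed $\phi$, $V=\mathcal{O}(L^{d-1}\log L)=o(N)$ (large caps); for $\phi_N\to0$ with $\phi_N L\to\infty$, $V=\mathcal{O}\bigl((\phi_N L)^{d-1}\log(\phi_N L)\bigr)=o\bigl((\phi_N L)^{d}\bigr)=o\bigl(N\sigma(C(\phi_N))\bigr)$ (small caps); and for $\phi=tN_L^{-1/d}\asymp t/L$, $V=\mathcal{O}(t^{d-1}\log t)$, which is precisely \eqref{eq:threshold-log}.

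The main obstacle is the bulk estimate: the Darboux asymptotics must be made uniform across the edge region $\psi\sim 1/L$ and the bulk, and the logarithm has to be extracted rigorously by separating the non-oscillatory mean of $\cos^2(\xi\psi+\eta)$ from its oscillatory part (controlled by a cosine-integral bound) uniformly in $\phi$ and $L$. Handling the turning points of the Jacobi asymptotics and the accumulation of error terms over the decomposition of $[0,\pi]$ is where the care is needed; everything else is bookkeeping built on the key identity and Lemma~\ref{lem5}.
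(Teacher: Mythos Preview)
Your proposal is correct and follows essentially the same route as the paper: derive the key identity $V(\X_L^H,\phi)=\gamma_d\int_0^\pi K_L(\cos\psi)^2\bigl(g_\phi(1)-g_\phi(\cos\psi)\bigr)\sin(\psi)^{d-1}\,\dd\psi$, feed in Lemma~\ref{lem5} to replace the cap factor by $\psi(\sin\phi)^{d-1}$ plus lower order on $[0,2\phi]$ and by $\sigma(C(\phi))$ on $[2\phi,\pi]$, and extract the logarithm from the $\psi^{-1}$ behaviour of the Darboux asymptotics of the Jacobi kernel on the bulk interval. The only packaging difference is that in the threshold regime the paper uses the Mehler--Heine asymptotics \eqref{eq:P-asymp2} to pass through the Bessel function $J_{d/2}$ on $[0,2tL^{-1}]$ and then reads off the logarithm from $\int_0^t J_{d/2}(\theta)^2\,\dd\theta=\tfrac1\pi\log t+\mathcal{O}(1)$, whereas you go directly with Darboux on $[1/L,2\phi]$; since the large-argument Bessel asymptotic coincides with the Darboux form, this amounts to the same computation and addresses exactly the edge-matching issue you flag at the end.
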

\begin{proof}
  The number variance $V(\X_L^H,\phi)$ can be expressed as (cf. similar
  computations that lead to \eqref{eq:g-energy})
  \begin{equation*}
    \gamma_d \int_0^\pi (g_\phi(1)-g_\phi(\cos \theta))K_L(\cos \theta)^2
    (\sin \theta)^{d-1}\,\dd\theta,
  \end{equation*}
  where $g_\phi$ is given by
  \eqref{eq:gphi}. Using Lemma~\ref{lem5}, we obtain
  \begin{equation}\label{eq:harmonic_number_variance}
  \begin{split}
    &V(\X_L^H,\phi)\\
=& \frac{\gamma_d}{\pi} \left(\!\!\frac{Z(d+1,L)}{\binom{L+\frac d2}L}\!\right)^2
    \!\!\!\!(2\sin\phi)^{d-1}\!\!\!\int_0^{2\phi}\!\!\!\left(\mathcal{P}_L^{(\frac d2,\frac d2-1)}(\cos
      \theta)\right)^2\!\!\left(\sin\frac \theta2\right)^d\!\!
    \left(\cos\frac \theta2\right)^{d-1}\!\!\!\!\!\!\dd\theta\\+
    &\mathcal{O}\Bigg(L^d(\sin\phi)^{d-3}\int_0^{2\phi}\left(\mathcal{P}_L^{(\frac d2,\frac d2-1)}(\cos
        \theta)\right)^2\left(\sin\frac \theta2\right)^{d+2}
      \left(\cos\frac \theta2\right)^{d-1}\dd\theta\Bigg) \\+
    &\gamma_d \left(\frac{Z(d+1,L)}{\binom{L+\frac d2}L}\right)^2
    \sigma(C(\phi))\int_{2\phi}^\pi\left(\mathcal{P}_L^{(\frac d2,\frac d2-1)}(\cos
        \theta)\right)^2(\sin \theta)^{d-1}\,\dd\theta.
  \end{split}
\end{equation}
% Notice that $N\asymp L^d$ and therefore $N^{\frac1d}\asymp L$.

  The case of large and small caps was studied in
  \cite{Beltran_Marzo_Ortega-Cerda2016:determinantal}; we summarise the
  computations given there for completeness. The case of caps at threshold
  order is new and will be given in more detail.
  We make use of known asymptotic expansions for the Jacobi polynomials
  (see~\cite[5.2.3 and 5.2.4]{Magnus_Oberhettinger_Soni1966:formulas_theorems})
  \begin{align}
    \label{eq:P-asymp1}
    \mathcal{P}_L^{(\frac d2,\frac d2-1)}(\cos \theta)&=
    \frac{\cos\left(\left(L+\frac d2\right)\theta-\frac\pi4(d+1)\right)}
    {\sqrt{\pi L}\left(\sin\frac \theta2\right)^{\frac{d+1}2}
    \left(\cos\frac \theta2\right)^{\frac{d-1}2}}+\mathcal{O}(L^{-\frac32})\\
 \mathcal{P}_L^{(\frac d2,\frac d2-1)}\left(\cos \frac \tau L\right)&=L^{\frac d2}
 \left(\frac 2\tau\right)^{\frac d2}J_{\frac d2}(\tau)+\mathcal{O}(L^{\frac d2-1}),
    \label{eq:P-asymp2}               
  \end{align}
  where $J_{\frac d2}$ denotes the Bessel function of the first kind of index $\frac d2$.  Given
  a constant $C>0$, the asymptotic relation \eqref{eq:P-asymp1} is used for
  $\theta>\frac CL$, whereas relation \eqref{eq:P-asymp2} is used for
  $\theta=\frac\tau L\leq\frac CL$.

  This gives
  \begin{multline}
    \label{eq:int-1}
      \int_0^{\frac CL}\left(\mathcal{P}_L^{(\frac d2,\frac d2-1)}(\cos
        \theta)\right)^2\left(\sin\frac \theta2\right)^d \left(\cos\frac
        \theta2\right)^{d-1}\,\dd\theta\\= \frac 1L\int_0^C J_{\frac
        d2}(\theta)^2\,\dd\theta+\mathcal{O}(L^{-2})
  \end{multline}
  for the integral over the ``small'' values of $\theta$,
  \begin{multline}
    \label{eq:int-2}
    \int_{\frac CL}^{\alpha}\left(\mathcal{P}_L^{(\frac d2,\frac d2-1)}(\cos
      \theta)\right)^2\left(\sin\frac \theta2\right)^d
    \left(\cos\frac \theta2\right)^{d-1}\,\dd\theta\\=
    \frac1{\pi L}\int_{\frac CL}^{\alpha}\frac{\cos\left(\left(L+\frac
          d2\right)\theta-\frac\pi4(d+1)\right)^2}{\sin(\frac
      \theta2)}\,\dd\theta+\mathcal{O}(L^{-2})
  \end{multline}
  for the integral over the ``large'' values of $\theta$,
  \begin{multline}
    \label{eq:int-3}
    \int_{\alpha}^\pi\left(\mathcal{P}_L^{(\frac d2,\frac d2-1)}(\cos
      \theta)\right)^2\left(\sin\frac \theta2\right)^{d-1} \left(\cos\frac
        \theta2\right)^{d-1}\,\dd\theta\\
    =\frac1{\pi L}\int_{\alpha}^\pi
    \frac{\cos\left(\left(L+\frac
          d2\right)\theta-\frac\pi4(d+1)\right)^2}{(\sin\frac \theta2)^2}\,\dd\theta+
    \mathcal{O}(L^{-2})=\mathcal{O}((L\alpha)^{-1})
  \end{multline}
  and
  \begin{multline}
    \label{eq:int-4}
    \int_0^\alpha \left(\mathcal{P}_L^{(\frac d2,\frac d2-1)}(\cos
      \theta)\right)^2\left(\sin\frac \theta2\right)^{d+2}
    \left(\cos\frac \theta2\right)^{d-1}\,\dd t\\=
    \frac1{\pi L}\int_0^\alpha \!\!\cos\left(\left(L+\frac
        d2\right)\theta-\frac\pi4(d+1)\right)^2 \sin\Big(\frac
    \theta2\Big)\,\dd\theta+\mathcal{O}(L^{-2})
    =
    \mathcal{O}(\alpha^2L^{-1})
  \end{multline}
for the integral in the error term.
  
In the case of large caps ($0<\phi<\frac\pi2$ fixed), we compute the number
variance as
  \begin{multline*}
    V(\X_L^H,\phi)=\frac{\gamma_d}{\pi}\left(\frac{Z(d+1,L)}{\binom{L+\frac d2}L}\right)^2
    \frac{(2\sin\phi)^{d-1}}L\Biggl(\int_0^C J_{\frac d2}(\theta)^2\,
      \dd\theta\\+
    \frac1{\pi}\int_{\frac CL}^{2\phi}\frac{\cos\left(\left(L+\frac
          d2\right)\theta-\frac\pi4(d+1)\right)^2}{\sin(\frac
      \theta2)}\,\dd\theta+
    \mathcal{O}(L^{-1})+\mathcal{O}(\phi^{-1})\Biggr)\\=
  \mathcal{O}((\sin\phi)^{d-1}L^{d-1}\log( L \phi ) ),
\end{multline*}
where we have used $\left(Z(d+1,L)/\binom{L+\frac d2}L\right)^2 \asymp L^d$ and
the logarithmic term comes from the second summand. This is the true asymptotic
order and due to $N_L\asymp L^d$ we have $V(\X_L^H,\phi)=o\left(N_L\right) $ as
$L\to\infty$ for all $\phi\in(0,\frac\pi2)$.

In the case of small caps, a similar computation gives
\begin{align*}
  V(\X_L^H,\phi_L) &= \mathcal{O}((\sin\phi_L)^{d-1}L^{d-1}\log( L \phi_L ) ) = \mathcal{O}\Big( \frac{\log( L \phi_L )}{ L \phi_L } \, (\sin\phi_L)^{d}L^{d}\Big) \\ &= \mathcal{O}\Big( \frac{\log( L \phi_L )}{ L \phi_L } \, N_L\,\sigma(C(\phi_L)) \Big)=o(N_L\,\sigma(C(\phi_L))).
\end{align*}

For caps at threshold order, we analyse the three terms in
\eqref{eq:harmonic_number_variance} separately. The first term gives
\begin{equation*}
  \frac{\gamma_d}{\pi}\left(\frac{Z(d+1,L)}{\binom{L+\frac d2}L}\right)^2
    \frac{(2\sin tL^{-1})^{d-1}}L\left(\int_0^t J_{\frac d2}(\theta)^2\,\dd\theta+
    \mathcal{O}(L^{-1})\right)
\end{equation*}
using \eqref{eq:P-asymp2}.
We use the asymptotic behaviour of the Bessel function for
$\theta\to\infty$
(see~\cite[3.14.1]{Magnus_Oberhettinger_Soni1966:formulas_theorems})
\begin{equation*}
  J_{\frac d2}(\theta)=\frac{\cos\left(\theta-\frac{\pi (d+1)}4\right)}
  {\sqrt{\frac{\pi\theta}2}}+\mathcal{O}(\theta^{-\frac32})
\end{equation*}
to obtain
\begin{equation*}
  \int_0^tJ_{\frac d2}(\theta)^2\,\dd\theta=\frac1\pi\log t+\mathcal{O}(1).
\end{equation*}
Thus the first term in \eqref{eq:harmonic_number_variance} for $\phi_L\sim
tL^{-1}$ is of order $t^{d-1}\log t$.

The second term in \eqref{eq:harmonic_number_variance} is analysed using
\eqref{eq:int-4}, which gives an order of $t^{d-1}$ for $\phi_L\sim
tL^{-1}$. Similarly, the third term is analysed using \eqref{eq:int-3}, which
again gives an order of $t^{d-1}$.

Putting these order estimates together yields
\begin{equation*}
  V(\X_L^H,tL^{-1})=\mathcal{O}(t^{d-1}\log t)
\end{equation*}
and concludes our proof.
\qed
\end{proof}

%%% Local Variables: 
%%% mode: latex
%%% TeX-master: "Hyperuniformity-MOFM"
%%% End: 

\section{Jittered Sampling}\label{sec:jittered}

In \cite{Gigante_Leopardi2017:diameter_ahlfors} it is shown that on arbitrary
Ahlfors regular metric measure spaces there exist area-regular partitions. For
the case of the sphere studied here, an area regular partition is given by
$\mathcal{A}$ $=\{A_1,\ldots,A_N\}$ with $\bigcup_{i=1}^N A_i =\Sph^d$,
$\sigma(A_i)=\frac1N$, and $i\neq j\Rightarrow A_i\cap A_j=\emptyset$
satisfying
\begin{equation}\label{eq:diam}
  \diam(A_i) \le C_dN^{-1/d},\qquad i=1,\ldots,N,
  % \quad\text{and }\forall i: \exists\mathbf{x}:
% C(\mathbf{x},c_dN^{-1/d})\subseteq A_i
\end{equation}
with a constant depending only on $d$
(see also \cite{alexander1972sum,bourgain1988distribution,
  kuijlaars1998asymptotics,mhaskar2001spherical,Le2006}).

Such partitions allow us to consider the average behaviour of \emph{jittered
  sampling}; the point process $\X_N^{\mathcal{A}}$ constructed by sampling the
sphere with the condition that each of the $N$ points lies in a distinct region
of the partition.

The jittered sampling variance integral is written as:
\begin{multline*}
  V(\X_N^{\mathcal{A}},\phi)\\=\!\!
  \int\limits_{\mathbb{S}^d}\!\int\limits_{A_1}\!\!\dots\!\! \int\limits_{A_N}\!\!
  \left(\sum_{i=1}^N
\mathbbm{1}_{C(\mathbf{x},\phi)}(\mathbf{y}_i)-N \sigma(C(\phi))\right)^2\!\!\!
\dd\sigma_1(\mathbf{y}_1)\cdots \dd\sigma_N(\mathbf{y}_N)\,\dd\sigma(\mathbf{x}),
\end{multline*}
where $\sigma_i(\cdot):=N\sigma(\cdot\cap A_i)$ is the uniform probability
measure on $A_i$.  
%
% This can be rewritten as off-diagonal and diagonal terms
%
The integral can be split into off-diagonal and diagonal terms
\begin{align}
  &V(\X_N^{\mathcal{A}},\phi)
=\sum_{\substack{i,j=1\\i\neq j}}^N
  \int\limits_{A_i}\int\limits_{A_j}\!\!\sigma(C(\mathbf{y}_i,\phi)\cap C(\mathbf{y}_j,\phi))
  \,\dd\sigma_i(\mathbf{y}_i)\,\dd\sigma_j(\mathbf{y}_j)
  \label{eq:variance_jittered}\\&+
  N\sigma(C(\phi))-N^2\sigma(C(\phi))^2\notag\\
  =&N\sum_{i=1}^N\int\limits_{A_i} \Bigg(\int\limits_{\mathbb{S}^d}\!\!
    \sigma(C(\mathbf{y}_i,\phi)\cap
    C(\mathbf{y},\phi))\,\dd\sigma(\mathbf{y}) \notag\\ &-
    \int\limits_{A_i}\!\!\sigma(C(\mathbf{y}_i,\phi)\cap C(\mathbf{y},\phi))
    \,\dd\sigma(\mathbf{y})\Bigg) \dd\sigma_i(\mathbf{y}_i)+
  N\sigma(C(\phi))-N^2\sigma(C(\phi))^2\notag\\
    =&N^2\left(\int\limits_{\mathbb{S}^d}\int\limits_{\mathbb{S}^d}
    \sigma(C(\mathbf{x},\phi)\cap C(\mathbf{y},\phi))
\,\dd\sigma(\mathbf{x})\,\dd\sigma(\mathbf{y})-\sigma(C(\phi))^2\right)\notag\\
  &+\sum_{i=1}^N\int\limits_{A_i}\int\limits_{A_i}\Big(\sigma(C(\mathbf{x}_i,\phi))-
      \sigma(C(\mathbf{x}_i,\phi)\cap C(\mathbf{y}_i,\phi))\Big)
      \,\dd\sigma_i(\mathbf{x}_i)\,\dd\sigma_i(\mathbf{y}_i)\notag\\
  =&\frac12\sum_{i=1}^N\int\limits_{A_i}\int\limits_{A_i}
      \sigma(C(\mathbf{x}_i,\phi)\triangle C(\mathbf{y}_i,\phi))
      \,\dd\sigma_i(\mathbf{x}_i)\,\dd\sigma_i(\mathbf{y}_i),\label{eq:diagonal}
\end{align}
where $\triangle$ denotes the symmetric difference operator of two sets. For
the last equality, we have used
\begin{align*}
  &\int\limits_{\mathbb{S}^d}\int\limits_{\mathbb{S}^d}
    \sigma(C(\mathbf{x},\phi)\cap C(\mathbf{y},\phi))
    \,\dd\sigma(\mathbf{x})\,\dd\sigma(\mathbf{y})\\
&=\int\limits_{\mathbb{S}^d}\int\limits_{\mathbb{S}^d}\int\limits_{\mathbb{S}^d}
\mathbbm{1}_{C(\mathbf{x},\phi)}(\mathbf{z})
\mathbbm{1}_{C(\mathbf{y},\phi)}(\mathbf{z})\,
\dd\sigma(\mathbf{z})\,\dd\sigma(\mathbf{x})\,\dd\sigma(\mathbf{y})\\
 &= \int\limits_{\mathbb{S}^d}\int\limits_{\mathbb{S}^d}\int\limits_{\mathbb{S}^d}
\mathbbm{1}_{C(\mathbf{z},\phi)}(\mathbf{x})
\mathbbm{1}_{C(\mathbf{z},\phi)}(\mathbf{y})\,
\dd\sigma(\mathbf{x})\,\dd\sigma(\mathbf{y})\,\dd\sigma(\mathbf{z})
=\sigma(C(\phi))^2.
\end{align*}
So in fact the variance of the jittered sampling process reduces to
the diagonal terms. The measure of the symmetric difference can be
bounded like
\begin{equation*}
   \sigma(C(\mathbf{x}_i,\phi)\triangle C(\mathbf{y}_i,\phi))\leq
   C_d\operatorname{surface}(\partial C(\mathbf{x}_i,\phi))
   \arccos(\langle\mathbf{x}_i,\mathbf{y}_i\rangle),
 \end{equation*}
 where $C_d$ is a constant only depending on the dimension $d$. This can be
 seen from Lemma~\ref{lem5} using the observation that
 $\operatorname{surface}(\partial C(\mathbf{x}_i,\phi))\asymp
 (\sin\phi)^{d-1}$.
 
 From the diameter bounds coming from our choice of equipartition, every
 summand in \eqref{eq:diagonal} can be bounded by
 $\mathcal{O}(\phi^{d-1}N^{-\frac1d})$, which gives
\begin{equation}
  \label{eq:jittered-bound}
  V(\X_N^{\mathcal{A}},\phi)=\mathcal{O}\left(\phi^{d-1}N^{\frac{d-1}d}\right);
\end{equation}
the implied constant depends only on the dimension and the constants in
\eqref{eq:diam}. 

\begin{theorem}\label{thm:jittered}
  The jittered sampling point process is hyperuniform in all three regimes.
\end{theorem}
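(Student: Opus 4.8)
The plan is to exploit the fact that essentially all the analytic work is already done: the uniform estimate \eqref{eq:jittered-bound}, namely $V(\X_N^{\mathcal{A}},\phi)=\mathcal{O}(\phi^{d-1}N^{(d-1)/d})$, holds with an implied constant depending only on $d$ and on the equipartition constants in \eqref{eq:diam}. I will therefore simply feed this single bound into each of the three criteria of Definition~\ref{def-hyper}, exploiting that the constant is uniform in $\phi$ so that the same estimate applies whether $\phi$ is held fixed, sent to zero along a sequence, or scaled as $tN^{-1/d}$.

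The two easy regimes I would dispatch first. For large caps I fix $\phi\in(0,\frac\pi2)$; then \eqref{eq:jittered-bound} reads $V(\X_N^{\mathcal{A}},\phi)=\mathcal{O}(N^{(d-1)/d})=\mathcal{O}(N^{1-1/d})=o(N)$, which is precisely \eqref{eq:large}. For the threshold regime I substitute $\phi=tN^{-1/d}$ into \eqref{eq:jittered-bound}, whereupon the powers of $N$ cancel,
\[
V(\X_N^{\mathcal{A}},tN^{-1/d})=\mathcal{O}\big((tN^{-1/d})^{d-1}N^{(d-1)/d}\big)=\mathcal{O}(t^{d-1}),
\]
so that the $\limsup$ condition \eqref{eq:threshold} holds (with the strong $\mathcal{O}(t^{d-1})$, not merely the relaxed $o(t^d)$).

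The small cap regime is the only one calling for a short computation. Using $\sigma(C(\phi_N))\asymp\phi_N^d$ from \eqref{eq:normalised.surface.cap}, I would form the ratio that \eqref{eq:small} requires to vanish,
\[
\frac{V(\X_N^{\mathcal{A}},\phi_N)}{N\,\sigma(C(\phi_N))}=\mathcal{O}\!\left(\frac{\phi_N^{d-1}N^{(d-1)/d}}{N\,\phi_N^{d}}\right)=\mathcal{O}\!\left(\frac{1}{\phi_N N^{1/d}}\right),
\]
and then invoke hypothesis~(2) of the small-cap definition in its equivalent form $\phi_N N^{1/d}\to\infty$ to conclude that this ratio tends to zero, giving $V(\X_N^{\mathcal{A}},\phi_N)=o(N\sigma(C(\phi_N)))$ for every admissible sequence $(\phi_N)$.

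I do not expect a genuine obstacle here, since the substance was the derivation of \eqref{eq:jittered-bound} from the symmetric-difference estimate built on Lemma~\ref{lem5} together with the diameter control \eqref{eq:diam}. The one point demanding care is legitimising the reuse of a single bound across all three regimes, which hinges on the $\mathcal{O}$-constant in \eqref{eq:jittered-bound} being genuinely independent of $\phi$ and of the chosen sequence $(\phi_N)$; this is guaranteed because that constant depends only on $d$ and the constants in \eqref{eq:diam}, so no uniformity issue arises when $\phi$ is allowed to vary with $N$.
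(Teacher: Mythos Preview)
Your proposal is correct and follows essentially the same approach as the paper: both feed the single uniform bound \eqref{eq:jittered-bound} into each of the three criteria of Definition~\ref{def-hyper}, with the small-cap case handled via $\phi_N N^{1/d}\to\infty$ and the threshold case by direct substitution. Your explicit remark about the uniformity of the implied constant in $\phi$ is a useful clarification that the paper leaves implicit.
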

\begin{proof}
  From \eqref{eq:jittered-bound} it is now immediate that
  $V(\X_N^{\mathcal{A}},\phi)=o(N)$ for all $\phi\in(0,\frac\pi2)$, which proves
  hyperuniformity for large caps. 

Again from \eqref{eq:jittered-bound} we obtain
\begin{equation*}
  V(X_N,\phi_N)=\mathcal{O}\Big((\phi_NN^{\frac1d})^{d-1}\Big)=
  o\Big((\phi_NN^{\frac1d})^d\Big)=o\big(\phi_N^dN\big)
\end{equation*}
under the assumptions on $(\phi_N)_{N\in\mathbb{N}}$ in
Definition~\ref{def-hyper}, which proves hyperuniformity for small caps.

\label{sec:jitt-threshold}
Inserting $\phi_N=tN^{-\frac1d}$ into \eqref{eq:jittered-bound} yields
\begin{equation*}
  V(\X_N^{\mathcal{A}},tN^{-\frac1d})=\mathcal{O}\big(t^{d-1}\big)\qquad\text{
 as }t\to\infty,
\end{equation*}
which implies hyperuniformity at threshold order.
\qed
\end{proof}

\subsection*{Jittered Sampling is Determinantal}
In this subsection we consider a general probability space
$(\Lambda,\mathcal{B},\mu)$ with the only requirement that for any
$N\in\mathbb{N}$ there exists a partition of $\Lambda$ into $N$ disjoint
measurable sets of equal measure.  Consider an area-regular partition
$\mathcal{A}=\{A_1,\ldots,A_N\}$ of the space $\Lambda$ into pairwise disjoint
measurable sets; i.e., % of equal measure $\mu(A_i)=\frac1N$; i.e.,
\begin{align*} 
A_i\cap A_j &= \emptyset, \qquad i\neq j,\\
\mu\Big(\bigcup_{i=1}^NA_i\Big)&=1, \\
\mu(A_i)&=\frac1N, \qquad i = 1, \dots, N.
\end{align*} Define the projection operator
\begin{equation*} p_{\mathcal{A}}(f)(x):=
\sum_{i=1}^N\frac{\Ind_{A_i}(x)}{\mu(A_i)}\int_{A_i}f(y)\,\dd\mu(y)=
\int_\Lambda K_{\mathcal{A}}(x,y)f(y)\,\dd\mu(y)
\end{equation*} to the space of functions measurable with respect to
the finite $\sigma$-algebra generated by $\mathcal{A}$. The kernel of this operator
is given by
\begin{equation*}
K_{\mathcal{A}}(x,y):=\sum_{i=1}^N\frac{\Ind_{A_i}(x)\Ind_{A_i}(y)}{\mu(A_i)}.
\end{equation*} The determinantal point process $\X_N^{\mathcal{A}}$
defined by the projection kernel $K_{\mathcal{A}}$ is then equal to
the jittered sampling process associated to the partition
$\mathcal{A}$, which can be seen by computing
\begin{multline*} \mathbb{E}\X_N^{\mathcal{A}}(A_1)\cdots
\X_N^{\mathcal{A}}(A_N)\\= \int_{A_1}\cdots\int_{A_N}
\det\left(K_{\mathcal{A}}(x_i,x_j)_{i,j=1}^N\right)\,\dd\mu(x_1)
\cdots \dd\mu(x_N).
\end{multline*} Expanding the determinant gives
\begin{multline*} \mathbb{E}\X_N^{\mathcal{A}}(A_1)\cdots
\X_N^{\mathcal{A}}(A_N)\\= \sum_\pi\sgn(\pi)\int_{A_1}\cdots\int_{A_N}
\prod_{i=1}^NK_{\mathcal{A}}(x_i,x_{\pi(i)})\,\dd\mu(x_1)\cdots\dd\mu(x_N).
\end{multline*} Now we notice that $K_{\mathcal{A}}(x_i,x_j)=0$ if
$i\neq j$ and $x_i\in A_i$ and $x_j\in A_j$. Thus, the integrand in the
sum vanishes for all $\pi\neq\id$, which gives
\begin{equation}\label{eq:jexpect} 
\mathbb{E}\X_N^{\mathcal{A}}(A_1)\cdots\X_N^{\mathcal{A}}(A_N)= \prod_{i=1}^N
\int_{A_i}K_{\mathcal{A}}(x_i,x_i)\,\dd\mu(x_i)=1.
\end{equation} The process $\X_N^{\mathcal{A}}$ samples $N$ points almost
surely by the fact that $NK_{\mathcal{A}}$ defines a projection operator
(see~\cite{Hough_Krishnapur_Peres+2009:zeros_gaussian}).  The product of random
variables $\X_N^{\mathcal{A}}(A_1)\cdots \X_N^{\mathcal{A}}(A_N)$ is either $0$
or $1$  and thus equal to $1$ (a.s.) by \eqref{eq:jexpect}. This implies that
the process samples exactly one point per set of the partition
$\mathcal{A}$. Furthermore, we have
\begin{align*}
\mathbb{E}\X_N^{\mathcal{A}}(D)&=\int_{D}K(x,x)\,\dd\mu(x)=
\sum_{i=1}^N\int_D\frac{\Ind_{A_i}(x)^2}{\mu(A_i)}\,\dd\mu(x)\\&=
\sum_{i=1}^N \frac{\mu(A_i\cap D)}{\mu(A_i)}=N\mu(D),
\end{align*} 
and, for $D\subseteq A_i$, this implies
$\mathbb{E}\X_N^{\mathcal{A}}(D)=\mu(D)/\mu(A_i)$; the sample point
chosen from $A_i$ is distributed with measure $\mu_i$ on $A_i$, where
\begin{equation*}
  \mu_i(A)=N\mu(A_i\cap A).
\end{equation*}

%%% Local Variables:
%%% mode: latex
%%% TeX-master: "Hyperuniformity-MOFM"
%%% End:

%\input{Peter}
%\input{Jonas}
%\input{Woeden}
%\input{appendix}
%\input{appendix2}
%\input{appendix3}

\begin{ackno}
  This material is based upon work supported by the National Science
  Foundation under Grant No. DMS-1439786 while the first three authors
  were in residence at the Institute for Computational and
  Experimental Research in Mathematics in Providence, RI, during the
  Spring 2018 semester.
\end{ackno}
% \begin{ackno}
%   This material is based upon work supported by the National Science
%   Foundation under Grant No. DMS-1439786 while the first three authors were in
%   residence at the Institute for Computational and Experimental
%   Research in Mathematics in Providence, RI, during the Spring 2018
%   semester.
%   % This work was done partly during the semester program ``Point
%   % Configurations in Geometry, Physics and Computer Science'' held at
%   % ICERM, Brown University, Providence, RI. The first three authors
%   % were participants of this program and found the environment greatly
%   % conducive to their research.
% \end{ackno}
%\bibliographystyle{amsplain}
%\bibliography{hyper}
\providecommand{\bysame}{\leavevmode\hbox to3em{\hrulefill}\thinspace}
\providecommand{\MR}{\relax\ifhmode\unskip\space\fi MR }
% \MRhref is called by the amsart/book/proc definition of \MR.
\providecommand{\MRhref}[2]{%
  \href{http://www.ams.org/mathscinet-getitem?mr=#1}{#2}
}
\providecommand{\href}[2]{#2}

\end{document}